\theoremstyle{plain}
\newtheorem{theorem}{Theorem}
\newcommand{\e}{\mbox{e}}
\newcommand{\I}{\mbox{Im}}
\newcommand{\Cl}{\mbox{Cl}}
\newcommand{\Li}{\mbox{Li}}
\newcommand{\be}{\begin{equation}}
\newcommand{\ee}{\end{equation}}
\newcommand{\bea}{\begin{eqnarray}}
\newcommand{\eea}{\end{eqnarray}}
\newcommand{\noi}{\noindent}
\newcommand {\MM}[1]{\qquad\mbox{#1}\qquad}
\newcommand {\TF}[6]
{{}_3F_2\!\left(\!\!\begin{array}{c}
#1,\;#2,\;#3\vspace{1mm}\\
#4,\;#5
\end{array}\Big|\,
#6\right)}
\newcommand {\TFF}[8]
{{}_4F_3\!\left(\!\!\begin{array}{c}
#1,\;#2,\;#3,\;#4\vspace{1mm}\\
#5,\;#6,\;#7
\end{array}\Big|\,
#8\right)}
\begin{document}

\title{\bf Two closed-form evaluations for the generalized hypergeometric function
$\bm{{}_4F_3(\frac1{16})}$}
\author{\bf Arjun K. Rathie$^a$ and Mykola A. Shpot$^b$\\
\\
${}^a\!${\em Department of Mathematics,}\\
{\it Vedant College of Engineering and Technology (Rajasthan Technical University),}
\\{\it Tulsi, Bundi District, India}\\[0mm]
{\bf E-Mail: arjunkumarrathie@gmail.com}\\[1mm]
${}^b\!${\it Institute for Condensed Matter Physics, 79011 Lviv, Ukraine}\\[0mm]
{\bf E-Mail: shpot.mykola@gmail.com}}
\date{\today}
\maketitle

\begin{abstract}
\noindent
The objective of this short note is to provide two closed-form evaluations
for the generalized hypergeometric function $_4F_3$ of the argument $\frac1{16}$\,.
This is achieved by means of separating a generalized hypergeometric function $_3F_2$
into even and odd components, together with the use of two known results for
$_3F_2(\pm\frac14)$ available in the literature.
As an application, we obtain an interesting infinite-sum representation for the number $\pi^2$.
Certain connections with the work of Ramanujan and other authors are discussed, involving
other special functions and binomial sums of different kinds.

\end{abstract}

\vskip 1mm
\noindent
\textbf{Keywords.} Generalized hypergeometric functions, Ramanujan-type summation formulas, Clausen function

\vskip 1mm
\noindent
\textbf{2020 Mathematics Subject Classification.} Primary 33C20; Secondary 33C05, 33C90.

\section{Introduction}

The generalized hypergeometric function ${}_pF_q(z)$ with $p$ numerator and $q$ denominator parameters is defined by the series expansion
\begin{equation}\label{A4}
{}_pF_q\Big(\begin{array}{c} a_1,a_2,\ldots,a_p\\
b_1,b_2,\ldots, b_q\end{array}\Big|\,z\Big)
=\sum_{n\geq 0}\dfrac{(a_1)_n(a_2)_n\ldots(a_p)_n}
{(b_1)_n(b_2)_n\ldots(b_q)_n}\;\dfrac{z^n}{n!}\,,
\end{equation}
where $(a)_n$ is the Pochhammer symbol \cite{Pochhammer} (shifted factorial, with $(1)_n=n!$),
\[
(a)_n=\dfrac{\Gamma(a+n)}{\Gamma(a)}=
\begin{cases}a(a+1)\ldots (a+n-1)&n\in\mathbb N;\;\\
             1                   &n=0\,;
\end{cases}\;a\in\mathbb C\,.
\]
As usual, $p$ and $q$ are non-negative integers, and the parameters
$a_j\in\mathbb C, j=1,\ldots ,p$ and $b_k\in\mathbb C\setminus\mathbb Z_0^-,
k=1,\ldots ,q$, where $\mathbb{C}$ denotes the set of complex numbers, $\mathbb Z_0^-$
is the set of negative integers along with zero.

When $p\leq q$, the power series \eqref{A4} converges for all finite values of $z$ with
$|z|<\infty$.
In the case $p=q+1$, it converges for $|z|<1$ and diverges for $|z|>1$.
On the unit circle $|z|=1$,
the sufficient condition for convergence of \eqref{A4} with $p=q+1$
is $\Re(s)>0$ where $s$ is the parametric excess
$$
s:=\sum_{k=1}^q b_k -\sum_{j=1}^p a_j\,.
$$
Also, when $p=q+1$ and $\{|z|=1,\;z\ne1\}$, the series \eqref{A4} converges with
$-1<\Re(s)\leq0$.
Analytic continuation can be employed to define the function ${}_pF_q(z)$ for $z$ values
outside the convergence region of the series \eqref{A4}.%
\footnote{Several interesting instances of the analytic continuation of the $_3F_2$
function are provided in \cite{PS18}.}

For more details on generalized hypergeometric function ${}_pF_q(z)$ we refer to
standard textbooks \cite{Bailey,Slater,Rainville,AAR,NIST}.

The subject of the present short communication are the special representatives of
generalized hypergeometric functions belonging to the class $p=q+1$ with specializations $p=3$ and $p=4$.

The main goal is the derivation of two new closed-form expressions for the function
$_4F_3$ with specific argument $\frac1{16}$ starting from two known evaluations of
$_3F_2(\pm\frac14)$. It is interesting that one of these last $_3F_2$ functions,
$_3F_2(-\frac14)$, has been well-known in mathematics and appeared in the standard
reference book by Prudnikov Brychkov and Marichev \cite{PBM3} as entry 7.4.6.1,
while another one, $_3F_2(\frac14)$, resulted as a by-product in calculations of
certain Feynman integrals by one of the authors in 2010 \cite{Sh10}.

Our motivation consists of several components.
First of all, ($i$) it is of primary interest to obtain explicit determinations
of (any) hypergeometric functions. On the other hand, ($ii$) we wanted to bring
the essentially unknown evaluations \eqref{RP2} and \eqref{5*} from a specific paper in theoretical physics \cite{Sh10} to the broad mathematical community.
We also found it interesting to ($iii$) discuss some implications of the fusion of
summation formulas \eqref{RP1} and \eqref{RP2}, in particular, the infinite-sum
representation of the number $\pi^2$ in \eqref{MMS}.

Finally, it often happens that quite the same mathematical calculations are performed in very different and seemingly disconnected branches of theoretical physics and mathematics.
And it is quite interesting and useful to establish connections between such
differently motivated calculations. Thus, our last motivation's component was
to ($iv$) perform a bright comparison of different approaches, functions and forms of results related to our initial input formulas \eqref{RP1} and \eqref{RP2}.
Thus it appeared that they  were implicitly present in Ramanujan's notebooks, as well as in quite different modern papers in mathematics and theoretical physics.
We hope that our "excursion" from Ramanujan to present days won't be no less interesting for the reader as our main $_4F_3$ results.

\section{Even and odd components of ${}_pF_q(z)$}

By decomposing the generalized hypergeometric function ${}_pF_q(z)$ into even and odd
components and making use of standard identities for the Pochhammer symbol \cite{Pochhammer},
$$
(a)_{2n}=2^{2n}\Big(\frac a2\Big)_n\Big(\frac a2+\frac12\Big)_n\MM{and}
(a)_{2n+1}=a\,2^{2n}\Big(\frac a2+\frac12\Big)_n\Big(\frac a2+1\Big)_n,
$$
one can easily obtain the following two general results, recorded, for example,
in the reference book by Prudnikov et. al. \cite[7.2.3.42]{PBM3}, viz.
\bea\label{BMP}\nonumber
&&{}_{q+1}F_q\Big(\begin{array}{c}a_1,a_2,\ldots ,a_{q+1}\\ b_1,b_2,\ldots , b_q\end{array}
\Big|\,z\Big)+
{}_{q+1}F_q\Big(\begin{array}{c}a_1,a_2,\ldots ,a_{q+1}\\ b_1,b_2,\ldots , b_q\end{array}
\Big|-z\Big)
\\&&
=2\,{}_{2q+2}F_{2q+1}\Big(\begin{array}{c}\frac{a_1}2\,,\frac{a_1}2+\frac12\,,\ldots ,
\frac{a_{q+1}}2,\frac{a_{q+1}}2+\frac12
\\\frac12\,,\frac{b_1}2\,,\frac{b_1}2+\frac12\,,\ldots ,\frac{b_q}2\,,\frac{b_q}2+\frac12\end{array}
\Big|\,z^2\Big)
\eea
and
\bea\label{BMM}\nonumber
&&{}_{q+1}F_q\Big(\begin{array}{c}a_1,a_2,\ldots ,a_{q+1}\\ b_1,b_2,\ldots , b_q\end{array}
\Big|\,z\Big)-
{}_{q+1}F_q\Big(\begin{array}{c}a_1,a_2,\ldots ,a_{q+1}\\ b_1,b_2,\ldots , b_q\end{array}
\Big|-z\Big)
\\&&
=2z\,\frac{a_1a_2\ldots a_{q+1}}{b_1b_2\ldots b_q}\,
{}_{2q+2}F_{2q+1}\Big(\begin{array}{c}\frac{a_1}2+\frac12\,,\frac{a_1}2+1,\ldots ,
\frac{a_{q+1}}2+\frac12\,,\frac{a_{q+1}}2+1
\\\frac32\,,\frac{b_1}2+\frac12\,,\frac{b_1}2+1,\ldots ,\frac{b_q}2+\frac12\,,\frac{b_q}2+1
\end{array}\Big|\,z^2\Big).
\eea

\section{Two closed-form evaluations for ${}_4F_3(\frac1{16})$}

In this section, we employ the above procedure to the two following results:
\be\label{RP1}
\TF{\frac12}{\frac12}{\frac12}{\frac32}{\frac32}{\!\!-\frac14}=\frac{\pi^2}{10}\,,
\ee
well known from \cite[7.4.6.1]{PBM3}, and
\be\label{RP2}
\TF{\frac12}{\frac12}{\frac12}{\frac32}{\frac32}{\frac14}=
\frac{1}{2\sqrt 3}\;\psi^\prime\Big(\frac{1}{3}\Big)-
\frac{\pi^2}{3\sqrt 3}\,,
\ee
obtained in \cite[(26), (41)]{Sh10} in the course of Feynman-graph calculations.
This leads us to two new closed-form summation formulas for the ${}_4F_3$ series with argument $\frac1{16}$, asserted in the following theorem.

\begin{theorem}\label{THM}
The following two closed-form evaluations for the function ${}_4F_3(\frac1{16})$ hold true:
\be\label{T1}
\TFF{\frac14}{\frac14}{\frac14}{\frac34}{\frac12}{\frac54}{\frac54}{\frac1{16}}=
\frac12\left[\pi^2\Big(\frac1{10}-\frac1{3\sqrt3}\Big)+\frac1{2\sqrt3}\,
\psi^\prime\Big(\frac{1}{3}\Big)\right]
\ee
and
\be\label{T2}
\TFF{\frac34}{\frac34}{\frac34}{\frac54}{\frac32}{\frac74}{\frac74}{\frac1{16}}=
36\left[\frac1{2\sqrt3}\,\psi^\prime\Big(\frac{1}{3}\Big)-
\pi^2\Big(\frac1{10}+\frac1{3\sqrt3}\Big)\right],
\ee
where $\psi(z)$ is the psi-function, the logarithmic derivative of the Euler
gamma function $\Gamma(z)$.
\end{theorem}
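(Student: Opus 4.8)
The plan is to feed the particular ${}_3F_2$ with all three upper parameters equal to $\tfrac12$ and both lower parameters equal to $\tfrac32$, evaluated at $z=\tfrac14$, into the even/odd decomposition identities \eqref{BMP} and \eqref{BMM}. Since the right-hand sides of those identities are written in terms of $z^2$, taking $z=\tfrac14$ produces series in $z^2=\tfrac1{16}$, which is exactly the argument we want. The sum $\eqref{RP1}+\eqref{RP2}$ will deliver \eqref{T1}, and the difference $\eqref{RP2}-\eqref{RP1}$ will deliver \eqref{T2}; the known closed forms \eqref{RP1} and \eqref{RP2} are then substituted at the very end.

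For \eqref{T1}, I would specialize \eqref{BMP} with $q=2$, $a_1=a_2=a_3=\tfrac12$, $b_1=b_2=\tfrac32$, and $z=\tfrac14$. The halved upper parameters become $\tfrac{a_j}{2}=\tfrac14$ and $\tfrac{a_j}{2}+\tfrac12=\tfrac34$, while the lower ones become $\tfrac{b_k}{2}=\tfrac34$ and $\tfrac{b_k}{2}+\tfrac12=\tfrac54$, together with the extra lower entry $\tfrac12$. This gives a ${}_6F_5(\tfrac1{16})$ whose upper list is $\{\tfrac14,\tfrac14,\tfrac14,\tfrac34,\tfrac34,\tfrac34\}$ and lower list is $\{\tfrac12,\tfrac34,\tfrac34,\tfrac54,\tfrac54\}$. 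Cancelling the two common $\tfrac34$'s reduces this to the ${}_4F_3$ appearing on the left of \eqref{T1}, so that
\be\nn
\TFF{\frac14}{\frac14}{\frac14}{\frac34}{\frac12}{\frac54}{\frac54}{\frac1{16}}
=\frac12\left[\,\TF{\frac12}{\frac12}{\frac12}{\frac32}{\frac32}{\frac14}
+\TF{\frac12}{\frac12}{\frac12}{\frac32}{\frac32}{\!\!-\frac14}\right],
\ee
and inserting \eqref{RP1} and \eqref{RP2} yields \eqref{T1} after collecting the $\pi^2$ terms.

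For \eqref{T2}, the same specialization is applied to \eqref{BMM}. Here the prefactor must be computed: $2z\,\dfrac{a_1a_2a_3}{b_1b_2}=2\cdot\tfrac14\cdot\dfrac{(1/2)^3}{(3/2)^2}=\dfrac1{36}$. The shifted upper parameters $\tfrac{a_j}{2}+\tfrac12=\tfrac34$, $\tfrac{a_j}{2}+1=\tfrac54$ and lower parameters $\tfrac{b_k}{2}+\tfrac12=\tfrac54$, $\tfrac{b_k}{2}+1=\tfrac74$, with the extra lower entry $\tfrac32$, produce a ${}_6F_5(\tfrac1{16})$ that collapses—after cancelling the two common $\tfrac54$'s—to the ${}_4F_3$ on the left of \eqref{T2}. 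Thus the difference of \eqref{RP2} and \eqref{RP1} multiplied by $36$ gives \eqref{T2}. The main obstacle is not analytic but bookkeeping: one must correctly track which of the six upper and five lower parameters coincide so that the ${}_6F_5$ genuinely reduces to a ${}_4F_3$, and must compute the rational prefactor $\tfrac1{36}$ without slip; convergence is never in question since $\tfrac1{16}$ lies well inside the unit disk.
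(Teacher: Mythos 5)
Your proposal is correct and follows exactly the paper's own route: specializing \eqref{BMP} and \eqref{BMM} with $q=2$, $a_1=a_2=a_3=\frac12$, $b_1=b_2=\frac32$, $z=\frac14$, cancelling the repeated parameters to reduce the resulting ${}_6F_5$ to a ${}_4F_3$, and then inserting \eqref{RP1} and \eqref{RP2}. Your parameter bookkeeping and the prefactor $\frac1{36}$ are all verified correct, and in fact your write-up is more explicit than the paper's own sketch of the proof.
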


\begin{proof}
The derivation of the results \eqref{T1} and \eqref{T2}
asserted by the Theorem \ref{THM} are straightforward and
based upon identities \eqref{BMP} and \eqref{BMM}.
We set therein $q=2$ and substitute $a_1=a_2=a_3=\frac12$ and $b_1=b_2=\frac32$, along with
$z=\frac14$.
After some simplifications in resulting hypergeometric $_6F_5$ functions, we make use of
explicit expressions from \eqref{RP1} and \eqref{RP2} and arrive at closed-form
evaluations \eqref{T1} and \eqref{T2}.
This completes the proof of the Theorem \ref{THM}.
\end{proof}

\vspace{4mm}\noi
{\bf Application.}
The following new hypergeometric representation of $\pi^2$ can be easily obtained from
\eqref{T1} and \eqref{T2} by subtracting the appropriately weighted equations, so that
the terms containing the derivative of the $\psi$ function disappear:
\be\label{MMF}
\frac{\pi^2}{10}=
\TFF{\frac14}{\frac14}{\frac14}{\frac34}{\frac12}{\frac54}{\frac54}{\frac1{16}}
-\frac1{72}\,
\TFF{\frac34}{\frac34}{\frac34}{\frac54}{\frac32}{\frac74}{\frac74}{\frac1{16}}.
\ee

Actually, the last formula directly reproduces the original result of \eqref{RP1}
if we recognize here the difference of even and odd parts of
$_3F_2(\frac12,\frac12,\frac12;\frac32,\frac32;\frac14)$.
However, we can alternatively consider the coefficients of $_4F_3$ functions in \eqref{MMF} as
\be
c_1(n)=\frac{\left(\frac{1}{4}\right)_n^3\left(\frac{3}{4}\right)_n}
{\left(\frac{5}{4}\right)_n^2 \left(\frac{1}{2}\right)_n n!}
=
\frac{2^{-2n}\Gamma\left(2n+\frac{1}{2}\right)}{(4n+1)^2\Gamma\left(n+\frac{1}{2}\right)n!}
\ee
and
\be
c_2(n)=\frac{\left(\frac{3}{4}\right)_n^3\left(\frac{5}{4}\right)_n}
{\left(\frac{7}{4}\right)_n^2 \left(\frac{3}{2}\right)_n n!}
=
9\,\frac{2^{-2n}\Gamma\left(2n+\frac{3}{2}\right)}{(4n+3)^2\Gamma\left(n+\frac{3}{2}\right)n!},
\ee
and produce thereof the coefficients of the linear combination in \eqref{MMF} via
\be
c(n)=c_1(n)-\frac1{72}\,c_2(n)=
\frac{2^{-2n}(4n(4n(12n+29)+81)+71)\Gamma\left(2n+\frac{1}{2}\right)}
{16n!(4n+1)^2(4n+3)^2\Gamma\left(n+\frac{3}{2}\right)}\,.
\ee
Now, summing up the whole combination from the right-hand side
of \eqref{MMF} with coefficients $c(n)$, we obtain the following representation of this
identity in the form of the infinite single sum:
\be\label{MMS}
\frac{\pi^2}{10}=
\frac1{16}\sum_{n=0}^\infty\frac{4 n (4 n (12 n+29)+81)+71}{(4 n+1)^2 (4 n+3)^2n!}\,
\frac{\Gamma \left(2 n+\frac{1}{2}\right)}{\Gamma \left(n+\frac{3}{2}\right)}\,
\Big(\frac1{64}\Big)^n.
\ee
We note that the convergence rate of the
sum in \eqref{MMS} is essentially enhanced compared to that in \eqref{RP1}:
As $n\to\infty$, the terms in the last sum decay $\propto4^{-n}$ times faster as in
the original $_3F_2$ function in \eqref{RP1}.

\vspace{1mm}
The results \eqref{T1} through \eqref{MMS} have been verified
using the MATHEMATICA software \cite{Math12}.

\vspace{4mm}\noi
{\bf Remark 1.} The method of splitting the generalized hypergeometric series \eqref{A4}
into its even and odd parts to generate new evaluations of higher-order hypergeometric functions is not new.
It goes back at least to Krupnikov and K\"olbig \cite[p. 84]{KK97} and Exton \cite{Exton97},
and has been used recently in a number of publications, among them \cite{LR22,KKR22,KLR22}.
Other aspects of separation a power series into its even and odd components are
considered in \cite{HMS79} and \cite[Chap. 3]{SriMan}.

\section{Relation to Ramanujan's notebooks, other functions, binomial and harmonic sums}

{\bf Remark 2.} Due to the relation \cite[(15)]{Gj84}, \cite[p. 329]{dD84}, \cite[(1.10)]{CS14}
\be\label{CP}
\Cl_2\Big(\frac{\pi}3\Big)=
\frac{1}{2\sqrt 3}\left[\psi^\prime\Big(\frac{1}{3}\Big)-\frac23\,\pi^2\right],
\ee
the results \eqref{RP2}, \eqref{T1} and \eqref{T2} can be alternatively expressed in terms of
the maximum value $\Cl_2(\frac{\pi}3)$ of the Clausen function $\Cl_2(\theta)$
\cite[Ch. 4]{Lewin}, \cite{CS14}
\be\label{DCL}
\Cl_2(\theta)=-\int_0^\theta d\theta\ln\Big|2\sin\frac{\theta}{2}\Big|
=\sum_{n\ge1}\frac{\sin n\theta}{n^2}=\I\Li_2(\e^{i\theta})\,.
\ee
This function, along with its natural extensions, is very important in the theory of dilogarithm, $\Li_2(z)$, and polylogarithm functions \cite{Lewin}, and in computations of the Feynman integrals
\cite{DavKal01,Weinzierl}.

\vspace{4mm}\noi
{\bf Remark 3.} A few results, equivalent to the one in \eqref{RP2}, have appeared
in the literature before \cite[(26), (41)]{Sh10}, though in quite different forms.

The first one goes back to Ramanujan, and appears in Example $(iii)$ of Entry 16 in
\cite[p. 40]{BJ-R} and \cite[p. 264]{Berndt1}, as an evaluation of the binomial sum
\be\label{BS}
\sum_{k\ge0}\Big({2k\atop k}\Big)\frac1{2^{4k+1}(2k+1)^2}=
\frac{3\sqrt3}4\sum_{k\ge0}\frac1{(3k+1)^2}-\frac{\pi^2}{6\sqrt3}
\ee
where
\be\label{BC}
2^{-2k}\Big({2k\atop k}\Big)=\frac1{k!}\Big(\frac12\Big)_k=2^{-2k}\,\frac{(2k)!}{(k!)^2}
=\frac{(2k-1)!!}{(2k)!!}:=\mu_k
\ee
is the normalized binomial mid-coefficient (see e. g. \cite[(2.2)]{AKS06} and related references). Indeed, using for the function
$_3F_2(\frac12,\frac12,\frac12;\frac32,\frac32;\frac14)$ from \eqref{RP2} its series definition \eqref{A4} along with the first equality from \eqref{BC} and the simple relation
$(\frac12)_k/(\frac32)_k=(2k+1)^{-1}$ between Pochhammer symbols, we see that it matches the
binomial sum on the left-hand side of \eqref{BS} up to an extra factor of $\frac12$ in \eqref{BS}. To match the right-hand sides of \eqref{RP2} and \eqref{BS}, we have to take into account that \cite[5.15.1]{NIST}, \cite[5.1.7.16]{PBM1}
$$
\sum_{k\ge0}\frac1{(3k+1)^2}=\frac19\,\psi^\prime\Big(\frac{1}{3}\Big).
$$

The same result has been reproduced by Zucker \cite[(2.13)]{Z85} in the form
\be\label{BZ}
\sum_{k\ge0}\Big({2k\atop k}\Big)\frac1{4^{2k}(2k+1)^2}=
\frac{3\sqrt3}4\,L_{-3}(2).
\ee
In notations of Zucker, $\frac{3\sqrt3}4\,L_{-3}(2)$ means the imaginary part of dilogarithm $\Li_2(\e^{i\pi/3})$ via \cite[(1.11)]{Z85} and \cite[(1.8)]{Z85}.
Hence we see, by noticing \eqref{DCL} and \eqref{CP},
that his equation \eqref{BZ} is tantamount just to \eqref{RP2} expressed as
\be\label{5*}\tag{$5^*$}
\TF{\frac12}{\frac12}{\frac12}{\frac32}{\frac32}{\frac14}=\Cl_2\Big(\frac{\pi}3\Big).
\ee

As noticed after \eqref{RP2}, this relation appeared in \cite{Sh10} as a result of a calculation of a Feynman integral. A slightly different treatment of the same integral in
\cite{PV00np} yielded an alternative relation,
\be\label{PV}
\frac{3\sqrt\pi}8\sum_{n=0}^\infty\frac{\Gamma(n+1)\psi(n+1)}{\Gamma(n+\frac32)}\,4^{-n}+
\frac{\pi\sqrt3}6\left(\gamma_E+\ln3\right)=
\frac2{\sqrt3}\,\Cl_2\Big(\frac{\pi}3\Big)=\frac13\,\psi^\prime\Big(\frac{1}{3}\Big)-
\frac{2\pi^2}9\,,
\ee
which is obtained by combining (A13) and (A.15) of \cite{PV00np}%
\footnote{The reference to \cite{PV00np} given in \cite{Sh10} is incorrect:
Ref. [13] in \cite{Sh10} has to be replaced by \cite{PV00np}.}.
In \eqref{PV}, $\gamma_E$ is the Euler constant, given by the limiting behavior of
harmonic numbers%
\footnote{For more related information, see e. g. \cite[p. 147]{AKS06}.}
\be\label{Hn}
H_n:=\sum_{k=1}^n\frac1k=\gamma_E+\psi(n+1)
\MM{via}
\gamma_E=\lim_{n\to\infty}\left(H_n-\ln n\right).
\ee
Using the first equation in \eqref{Hn} and \cite[Annex 1.6]{PBM1}
$$
\Gamma\Big(n+\frac32\Big)=\frac{\sqrt\pi}2\,2^{-n}\,\frac{(2n+1)!}{n!},
$$
we can rewrite \eqref{PV} in a more instructive form:
\be\label{IB}
\sum_{n=0}^\infty\frac{H_n}{\left({2n\atop n}\right)(2n+1)}=
\frac8{3\sqrt3}\,\Cl_2\Big(\frac{\pi}3\Big)-\frac{2\pi}{3\sqrt3}\,\ln3.
\ee
Now, combining \eqref{BS}, \eqref{BZ}, \eqref{5*}, and \eqref{IB}, we obtain a nice chain of equalities,
\be\label{R1}
\TF{\frac12}{\frac12}{\frac12}{\frac32}{\frac32}{\frac14}=
\sum_{k=0}^\infty\Big({2k\atop k}\Big)\frac{4^{-2k}}{(2k+1)^2}=
\frac{3\sqrt3}8\,\sum_{k=0}^\infty\frac{H_k}{\left({2k\atop k}\right)(2k+1)}+\frac{\pi}4\,\ln3=
\Cl_2\Big(\frac{\pi}3\Big),
\ee
relating the generalized hypergeometric function from \eqref{RP2} and \eqref{5*} with corresponding binomial and inverse binomial harmonic sums.

It seems that identifications of results \eqref{RP2} and \eqref{5*} in terms of
a generalized hypergeometric function $_3F_2$ of argument $\frac14$ appear only in equations
(26) and (41) of reference \cite{Sh10}.

\vspace{4mm}\noi
{\bf Remark 4.} Finally, we mention a similar connection of the $_3F_2$ function of argument $-\frac14$ from \eqref{RP1}
with the sum $f(x)$ considered by Ramanujan in Example $(ii)$ of Entry 8 (see
\cite[p. 24]{BJ-R}, \cite[p. 250]{Berndt1}), and that from \cite[(3.9)]{Z85}.

Similarly as before, we easily establish the equivalence of \eqref{RP1} with \cite[(3.9)]{Z85}, namely
\be\label{BZ9}
\sum_{k\ge0}\Big({2k\atop k}\Big)\frac{(-1)^k}{2^{4k}(2k+1)^2}=\frac{\pi^2}{10}.
\ee
Moreover, Zucker \cite[p. 101]{Z85} notices the equivalence of his equation (3.9) with the special result $f(1/\sqrt5)=\pi^2/20$ for the Ramanujan's sum
\be\label{RS}
f(x)=\sum_{k=1}^\infty\frac{h_k\,x^{2k-1}}{2k-1},\MM{where} h_k=\sum_{j=1}^k\frac1{2j-1}.
\ee
Hence, combining \eqref{RP1}, \eqref{BZ9}, and \eqref{RS}, we may write
\be\label{R2}
\TF{\frac12}{\frac12}{\frac12}{\frac32}{\frac32}{-\frac14}=
\sum_{k=0}^\infty\Big({2k\atop k}\Big)\frac{(-16)^{-k}}{(2k+1)^2}=
2\sum_{k=1}^\infty\frac{h_k\,5^{-k+\frac12}}{2k-1}=
\frac{\pi^2}{10}.
\ee
The relations of the odd harmonic number $h_k$ to harmonic numbers $H$ and digamma
function $\psi$ are given by \cite[(2.3)]{AKS06}, viz.
$$
h_k=H_{2k}-\frac12\,H_k=\frac12\left[\psi\Big(k+\frac12\Big)-\psi\Big(\frac12\Big)\right].
$$

We are quite sure that the relations \eqref{R1} and \eqref{R2} deserve further analytical investigation,
in particular towards generalizations to other arguments of $_3F_2$ functions, apart from $\pm\frac14$.

\section{Concluding remarks}
In this note, we have provided two closed-form evaluations for the generalized hypergeometric function $_4F_3$ of the argument $\frac1{16}$\,.
A combination of these results yields an interesting representation \eqref{MMS}
of the transcendental number $\pi^2$ in terms of an infinite sum with an enhanced convergence rate.

There are many theoretical and practical applications of $_pF_q$ functions in
different fields of mathematics \cite{Milg18,FGR20,RP21,RMP22,KP22}, statistics and combinatorics \cite{RP21,RKP22,KKR22}, theoretical physics \cite{SS15,PS18,Anti} and engineering \cite{SRS18,RGK21}.%
\footnote{It is quite impossible to give an exhaustive list
of relevant references here; our choice is motivated by scientific interests of the authors.}
Hopefully, our results could be potentially useful in some of these areas.
To the best of our knowledge, the established summation formulas \eqref{T1} and \eqref{T2}
did not appear in the literature before; we believe, they represent a definite contribution to the theory of generalized hypergeometric functions.

Moreover, we have established certain connections of functions under consideration with the work
of Ramanujan and other authors, with other special functions, and with binomial sums of different kinds, some of them including harmonic numbers $H_k$ and $h_k$.

A large number of results closely related to that of
\eqref{T1}, \eqref{T2} and \eqref{MMS} are under investigation and will be subject of a subsequent
publication.

\providecommand{\href}[2]{#2}
\begingroup\endgroup


\begin{thebibliography}{10}
\addcontentsline{toc}{section}{References}

\bibitem{Pochhammer}
E.~W. Weisstein, ``{Pochhammer Symbol. From MathWorld -- A Wolfram Web
  Resource}''.\newblock\url{https://mathworld.wolfram.com/PochhammerSymbol.html}.

\bibitem{PS18}
R.~B. Paris and M.~A. Shpot, \emph{A {F}eynman integral in {L}ifshitz-point and
  {L}orentz-violating theories in $\mathbb {R}^{D}\oplus\mathbb {R}^m$},
  \href{http://dx.doi.org/10.1002/mma.4763}{\emph{Math. Meth. Appl. Sci.} {\bf
  41} (2018) 2220--2246}.

\bibitem{Bailey}
W.~N. Bailey, \emph{Generalized Hypergeometric Series}.
\newblock Hafner, New York, 1972.

\bibitem{Slater}
L.~J. Slater, \emph{Generalized Hypergeometric Functions}.
\newblock Cambridge University Press, Cambridge, London and New York, 1966.

\bibitem{Rainville}
E.~D. Rainville, \emph{Special Functions}.
\newblock MacMillan, New York, 1960.

\bibitem{AAR}
G.~E. Andrews, R.~Askey and R.~Roy, \emph{Special functions},  in
  \emph{Encyclopedia of Mathematics and its Applications}, vol.~71.
\newblock Cambridge University Press, Cambridge, 1999.

\bibitem{NIST}
F.~W.~J. Olver, D.~W. Lozier, R.~F. Boisvert and C.~W. Clark, eds.,
  \emph{{NIST} Handbook of Mathematical Functions}.
\newblock Cambridge University Press, Cambridge, 2010.

\bibitem{PBM3}
A.~P. Prudnikov, {\relax Yu}.~A. Brychkov and O.~I. Marichev, \emph{Integrals
  and Series. More Special Functions}, vol.~3.
\newblock Gordon and Breach, New York, 1990.

\bibitem{Sh10}
M.~A. Shpot, \emph{Two-loop {RG} functions of the massive $\phi^4$ field theory
  in general dimensions},
  \href{http://www.icmp.lviv.ua/journal/zbirnyk.61/13101/art13101.pdf}
  {\emph{Condens. Matter Phys.} {\bf 13} (2010) 13101}.

\bibitem{Math12}
{Wolfram Research, Inc.}, ``Mathematica, {V}ersion 12.3.''

\bibitem{KK97}
E.~Krupnikov and K.~K\"olbig, \emph{Some special cases of the generalized
  hypergeometric function ${}_{q+1}{F}_q$},
  \href{http://dx.doi.org/10.1016/S0377-0427(96)00111-2}{\emph{J. Comput. Appl.
  Math.} {\bf 78} (1997) 79--95}.

\bibitem{Exton97}
H. Exton, \emph{Some new summation formulae for the generalised hypergeometric
  function of higher order},
  \href{http://dx.doi.org/10.1016/S0377-0427(96)00154-9}{\emph{J. Comput. Appl.
  Math.} {\bf 79} (1997) 183--187}.

\bibitem{LR22}
D.~Lim and A.~Rathie, \emph{A note on two known sums involving central binomial
  coefficients with an application},
  \href{http://dx.doi.org/10.7468/jksmeb.2022.29.2.171}{\emph{Pure Appl. Math.}
  {\bf 29} (2022) 171--177}.

\bibitem{KKR22}
B.~R.~S. Kumar, A.~K{\i}l{\i}\c{c}man and A.~K. Rathie, \emph{Applications of
  {L}ehmer{'}s infinite series involving reciprocals of the central binomial
  coefficients},
  \href{http://dx.doi.org/10.1155/2022/1408543}{\emph{Funct. Spaces} {\bf 2022} (2022) 1408543}.

\bibitem{KLR22}
B.~Kumar, D.~Lim and A.~Rathie, \emph{On several new closed-form evaluations
  for the generalized hypergeometric functions},
  \href{http://dx.doi.org/10.22049/cco.2022.27794.1355}{\emph{Comm.
  Combinatorics Optimization} (2022) 1--13}.
  
\bibitem{HMS79}
H.~M. Srivastava, \emph{A note on certain identities involving generalized
  hypergeometric series}, {\emph{Indag. Math. (Proceedings)} {\bf 41} (1979)
  191--201}.

\bibitem{SriMan}
H.~M. Srivastava and H.~L. Manocha, \emph{A treatise on generating functions}.
\newblock Halsted Press (Ellis Horwood Limited, Chichester)/Wiley, John Wiley
  and Sons, New York, Chichester, Brisbane and Toronto, 1984.

\bibitem{Gj84}
C.~Grosjean, \emph{{Formulae concerning the computation of the Clausen integral
  Cl${}_2(\theta)$}},
  \href{http://dx.doi.org/10.1016/0377-0427(84)90008-6}{\emph{J. Comput. Appl.
  Math.} {\bf 11} (1984) 331--342}.

\bibitem{dD84}
P.~{de Doelder}, \emph{{On the Clausen integral Cl${}_2(\theta)$ and a related
  integral}},
  \href{http://dx.doi.org/10.1016/0377-0427(84)90007-4}{\emph{J.
  Comput. Appl. Math.} {\bf 11} (1984) 325--330}.

\bibitem{CS14}
J.~Choi and H.~Srivastava, \emph{{The Clausen function ${\sf Cl}_2(x)$ and its
  related integrals}},
\href{http://thaijmath.in.cmu.ac.th/index.php/thaijmath/article/view/866}
{\emph{Thai J. Math.} {\bf 12} (2014) 251--264}.

\bibitem{Lewin}
L.~Lewin, \emph{Polylogarithms and Associated Functions}.
\newblock Elsevier, New York, 1981.

\bibitem{DavKal01}
A.~I. Davydychev and M.~{\relax Yu}. Kalmykov, \emph{New results for the
  $\varepsilon$-expansion of certain one-, two- and three-loop {F}eynman
  diagrams}, \href{http://dx.doi.org/10.1016/S0550-3213(01)00095-5}{\emph{Nucl.
  Phys. B} {\bf 605} (2001) 266--318}.

\bibitem{Weinzierl}
S.~Weinzierl, \emph{Feynman Integrals: {A} Comprehensive Treatment for Students
  and Researchers}.
\newblock Springer, 2022,
  \href{http://dx.doi.org/10.1007/978-3-030-99558-4}{10.1007/978-3-030-99558-4}.

\bibitem{BJ-R}
B.~Berndt and P.~Joshi, \emph{Chapter 9 of {R}amanujan's {S}econd {N}otebook:
  {I}nfinite Series Identities, Transformations, and Evaluations}, vol.~23 of
  \emph{Contemporary Mathematics}.
\newblock American Mathematical Society, 1983.

\bibitem{Berndt1}
B.~Berndt, \emph{{R}amanujan's {N}otebooks. Part 1}.
\newblock Springer, 1985.

\bibitem{AKS06}
H.~Alzer, D.~Karayannakis and H.~Srivastava, \emph{Series representations for
  some mathematical constants},
  \href{http://dx.doi.org/10.1016/j.jmaa.2005.06.059}{\emph{J. Math. Anal.
  Appl.} {\bf 320} (2006) 145--162}.

\bibitem{PBM1}
A.~P. Prudnikov, {\relax Yu}.~A. Brychkov and O.~I. Marichev, \emph{Integrals
  and Series. Elementary Functions}, vol.~1.
\newblock Gordon and Breach, New York, 1986.

\bibitem{Z85}
I.~Zucker, \emph{On the series $\sum_{k=1}^\infty\left(2k\atop
  k\right)^{-1}k^{-n}$ and related sums},
  \href{http://dx.doi.org/10.1016/0022-314X(85)90019-8}{\emph{J. Number Theory}
  {\bf 20} (1985) 92--102}.

\bibitem{PV00np}
A.~Pelissetto and E.~Vicari, \emph{The effective potential of {N}-vector
  models: a field-theoretic study to ${O}(\varepsilon^3)$},
  \href{http://dx.doi.org/10.1016/S0550-3213(00)00085-7}{\emph{Nucl. Phys. B}
  {\bf 575} (2000) 579--598}.

\bibitem{Milg18}
M.~Milgram, \emph{Variations on a hypergeometric theme},
  \href{http://dx.doi.org/10.7153/jca-2018-13-01}{\emph{J. Class. Anal.} {\bf
  13} (2018) 1--43}.

\bibitem{FGR20}
F.~Ferretti, A.~Gambini and D.~Ritelli, \emph{Identities for {C}atalan{'}s
  constant arising from integrals depending on a parameter},
  \href{http://dx.doi.org/10.1007/s10114-020-9451-9}{\emph{Acta. Math.
  Sin.-English Ser.} {\bf 36} (2020) 1083--1093}.

\bibitem{RP21}
A.~K. Rathie and R.~B. Paris, \emph{On a new result for the hypergeometric
  function},
\href{http://dx.doi.org/10.12988/ams.2021.914287}{\emph{Appl.
  Math. Sci.} {\bf 15} (2021) 57--61}.

\bibitem{RMP22}
A.~K. Rathie, G.~V. Milovanovi\v{c} and R.~B. Paris, \emph{Hypergeometric
  representations of {G}elfond{'}s constant and its generalisations},
  \href{http://www.vesnik.math.rs/vol/mv22107.pdf}
  {\emph{Matemati\v{c}ki Vesnik} {\bf 74} (2022) 71--77}.

\bibitem{KP22}
D.~B. Karp and E.~G. Prilepkina, \emph{Hypergeometric ${}_4{F}_3(1)$ with
  integral parameter differences},
  \href{http://dx.doi.org/10.1134/S1995080222090128}{\emph{Lobachevskii J.
  Math.} {\bf 43} (2022) 1326--1336}.

\bibitem{RKP22}
A.~K. Rathie, I.~Kim and R.~B. Paris, \emph{A note on a generalization of two
  well-known cominatorial identities via a hypergeometric series approach},
  \href{http://math.colgate.edu/~integers/w28/w28.pdf}{\emph{Integers} {\bf 22} (2022) {A}28}.


\bibitem{SS15}
M.~A. Shpot and H.~M. Srivastava, \emph{The {C}lausenian hypergeometric
  function with unit argument and negative integral parameter differences},
  \href{http://dx.doi.org/http://dx.doi.org/10.1016/j.amc.2015.03.031}{\emph{Appl.
  Math. Comput.} {\bf 259} (2015) 819 -- 827}.

\bibitem{Anti}
J.~Bl\"umlein and C.~Schneider, eds., \emph{Anti-Differentiation and the
  Calculation of {F}eynman Amplitudes}.
\newblock Texts \& Monographs in Symbolic Computation. Springer, 1st~ed., 2021.

\bibitem{SRS18}
G.~M. Scarpello, D.~Ritelli and G.~Spaletta, \emph{Aircraft planar trajectories
  in crosswind navigation: some hypergeometric solutions},
  \href{http://dx.doi.org/10.1088/1361-6404/aae7a1}{\emph{European Journal of
  Physics} {\bf 40} (2018) 015001}.

\bibitem{RGK21}
A.~K. Rathie, Y.~H. Geum and H.~Kim, \emph{A note on certain {L}aplace
  transforms of convolution-type integrals involving product of two generalized
  hypergeometric functions},
  \href{http://dx.doi.org/10.1155/2021/8827275}{\emph{Math. Probl. Eng.} {\bf
  2021} (2021) 8827275}.

\end{thebibliography}
\end{document}